\newtheorem{ass}[]{Assumption}
\newtheorem{prpstn}[]{Proposition}
\newtheorem{rmrk}[]{Remark}
\newcommand {\x} { {\bf x} }
\newcommand{\beq}{\begin{equation}}
\newcommand{\eeq}{\end{equation}}
\newcommand\smallO{
  \mathchoice
    {{\scriptstyle\mathcal{O}}}
    {{\scriptstyle\mathcal{O}}}
    {{\scriptscriptstyle\mathcal{O}}}
    {\scalebox{.7}{$\scriptscriptstyle\mathcal{O}$}}
  }
\def\n{{\bf n}}
\newcommand {\e}  {\varepsilon}
\numberwithin{equation}{section}
\numberwithin{prpstn}{section}
\numberwithin{ass}{section}
\numberwithin{rmrk}{section}
\title{Effective interface conditions for continuum mechanical models describing the invasion of multiple cell populations through thin membranes \thanks{This work was partially supported by INdAM, the Italian Ministry of Education, Universities and Research through the MIUR grant ``Dipartimenti di Eccellenza 2018-2022'' (Project no. E11G18000350001) and the Scientific Research Programmes of Relevant National Interest (Project n. 2017KL4EF3).}}
\author{Chiara Giverso\thanks{Politecnico di Torino, IT (chiara.giverso@polito.it)}
\and
Tommaso Lorenzi\thanks{Politecnico di Torino, IT (tommaso.lorenzi@polito.it)}
\and
Luigi Preziosi\thanks{Politecnico di Torino, IT (luigi.preziosi@polito.it)}
}
\begin{document}
\date{}
\maketitle

\begin{abstract}
We consider a continuum mechanical model for the migration of multiple cell populations through parts of tissue separated by thin membranes. In this model, cells belonging to different populations may be characterised by different proliferative abilities and mobility, which may vary from part to part of the tissue, as well as by different invasion potentials within the membranes. The original transmission problem, consisting of a set of mass balance equations for the volume fraction of cells of every population complemented with continuity of stresses and mass flux across the surfaces of the membranes, is then reduced to a limiting transmission problem whereby each thin membrane is replaced by an effective interface. In order to close the limiting problem, a set of biophysically-consistent transmission conditions is derived through a formal asymptotic method. Models based on such a limiting transmission problem may find fruitful application in a variety of research areas in the biological and medical sciences, including developmental biology, immunology and cancer growth and invasion.
\end{abstract}



\section{Introduction}
Transmission problems for nonlinear partial differential equations describing reaction-diffusion processes and transport phenomena in spatial domains that comprise different parts separated by thin layers (i.e. films or membranes) arise in the mathematical modelling of various chemical, physical and biological systems~\cite{abdelkader2013asymptotic, achdou1998effective, aho2016diffusion, ammari2004reconstruction, bellieud2016asymptotic, Scialo, bonnet2016effective, bruna2015effective, ciavolella2020existence, gahn2018effective, geymonat1999mathematical, haddar2008generalized, joly2006matching, lenzi2016anomalous, marigo2016two, neuss2007effective, perrussel2013asymptotic, poignard2012boundary}. \\
Due to the analytical and numerical challenges posed by the presence of such layers~\cite{auvray2018asymptotic}, it is often convenient to approximate the original problem by an equivalent transmission problem whereby each thin layer is replaced by an effective interface. The equivalent problem is then closed by imposing suitable transmission conditions on the effective interfaces. Such an effective-interface approach brings considerable modelling and computational advantages. From the modelling point of view, the main advantage lies in the fact that a detailed model of the phenomena that occur within the thin layer is not required. From the numerical view point, this approximation ensures a strong reduction of computational costs, especially in the case of very thin layers, since it does not require the generation of the fine mesh that would be necessary in order to obtain accurate numerical results inside and in the proximity of the thin layer. The price to pay for having a simpler and computationally more efficient model is the introduction of some effective interface parameters, the estimation of which may require {\it ad hoc} experiments and extensive parameter fitting.

In~\cite{chaplain2019derivation}, we developed an asymptotic method that enables the formal derivation of biophysically-consistent transmission conditions to close such an equivalent transmission problem for a continuum mechanical model of cell invasion through tissues separated by thin porous membranes. The transmission conditions formally derived in~\cite{chaplain2019derivation} can be regarded as a nonlinear generalisation of the classical Kedem--Katchalsky interface conditions~\cite{kedem1958thermodynamic} and, in contrast to other interface conditions of a similar type which have been employed to model cell invasion~\cite{Gallinato}, they allow the cell volume fraction to be discontinuous across the effective interface, while ensuring mass conservation. A rigorous derivation of such transmission conditions, under a few simplifying assumptions and focussing on a specific form of the barotropic relation for the pressure (i.e. the pressure is defined as a power of the cell volume fraction), was recently carried out in~\cite{ciavolella2021effective}.

The methods employed in~\cite{chaplain2019derivation, ciavolella2021effective} apply to biological scenarios in which there is only one single population of invading cells. Yet, physiological and pathological processes of cell invasion through tissues and membranes often involve multiple cell populations, which have different phenotypic characteristics and thus display different behaviours~\cite{Hagedorn_2011}. This might, for instance, be the case of the heterogeneous proliferation and invasion potentials expressed by tumour cells depending on their metabolic characteristics~\cite{astanin,fiandaca2021mathematical,lorenzi2018role,villa2021modeling} or the variability in invasion potential observed in different cell types depending on the deformability of their nucleus~\cite{arduino2015multiphase, giverso2018nucleus}. Hence, modelling such processes requires to describe the spatio-temporal evolution of multiple cell populations, characterised by different proliferation, migration and invasion abilities. For this reason, here we generalise the formal derivation method presented in~\cite{chaplain2019derivation} to the case of multiple cell populations, thus widening the application domain of the transmission conditions resulting therefrom. 

\section{Statement of the problem and main results}
With the aim of extending the results in~\cite{chaplain2019derivation} to encompass a wider range of biological scenarios, we consider a system that comprises $N$ populations of cells which move through a region of space that is filled with a porous embedding medium (e.g. the extracellular matrix). Mathematically, we identify such a region with a simply-connected spatial domain ${\cal D} \subset \mathbb{R}^{d}$, with smooth boundary $\partial \mathcal{D}$, where $d=1,2,3$ depending on the biological problem at hand. We focus on a scenario where the spatial domain is divided into two regions, ${\cal D}_1$ and ${\cal D}_3$, separated by a thin layer, ${\cal D}_2$. The interfaces between the different sub-domains are denoted by $\Sigma_{12}$ and $\Sigma_{23}$, respectively. 

The volume fraction of cells in population $n\in\{1, \ldots, N\}=:{\cal N}$ at position $\x \in {\cal D}_i$ and time $t\geq0$ is modelled by the function $\phi^n_i(t,\x) \geq 0$, whose evolution is governed by the following transmission problem
\begin{equation}\label{tlp}
\left\{
\begin{array}{ll}
\displaystyle{\frac{\partial \phi^n_i}{\partial t} + \nabla \cdot (\phi^n_i \, {\bf v}^n_i)  = \Gamma^n_i(\phi^n_i, \Phi_i)} 
&{\rm in}\ {\cal D}_i, \; i=1,2,3,\\[10pt]
{\bf v}^n_i=-\mu^n_i \nabla [P(\Phi_i)+p^n(\phi^n_i)] 
&{\rm in}\ {\cal D}_i, \; i=1,2,3,\\[10pt]
 [\![ \phi^n {\bf v}^n_i ]\!] \cdot {\bf \n}_{ij} = 0
&{\rm on}\ \Sigma_{ij}, \; i=1,2, \; j=i+1,\\[10pt]
 [\![P(\Phi_i)+p^n(\phi^n_i)]\!] = 0   &{\rm on}\ \Sigma_{ij}, \; i=1,2, \; j=i+1,
\end{array}
\right.
\;\; n\in {\cal N},
\end{equation}
where $\Phi_i := \displaystyle{\sum_{n = 1}^N \phi^n_i}$ is the total cell volume fraction. Equation~\eqref{tlp}$_2$ expresses Darcy's law for cells in a porous environment~\cite{Ambrosi_closure} and
models the cells' tendency to move towards regions where they feel less compressed. The functions $\mu^n_i(t,\x)>0$ are the cell mobility coefficients and the pressure is given by the sum of two terms. The first one, $P$, is a function of the total cell volume fraction, meaning that cells feel the pressure exerted by all nearby cells as a whole. 
Mathematically speaking, this term gives rise to cross-diffusion effects. When $p^n \equiv 0$: if all mobility coefficients were equal, then~\eqref{tlp}$_2$ would reduce to the case of what in mixture theory is called a {\it constrained mixture} (i.e. all populations would move with the same speed); if the mobility coefficients were different, then the ratios between the speeds of different populations would still be fixed and equal to the ratios between the corresponding mobility coefficients. The second term, $p^n$, describes an additional contribution to cell motion and models the fact that cells may still move even if $\nabla P = 0$. The independence of $P$ and $p^n$ from the index $i$ reflects the idea that the cells' response to the pressure does not depend on the sub-domain they are in.

Moreover, the functions $\Gamma^n_i(\phi^n_i, \Phi_i)$ are net growth rates, whose dependence on $\Phi_i$ takes into account the fact that the growth of the $n^{th}$ population in the $i^{th}$ sub-domain may also be affected by the presence of cells belonging to other populations, through the total cell volume fraction. The dependence on the index $n$ of the functions that model the cell mobility coefficient and the net growth rate reflects the fact that cells belonging to different populations may have different mobilities and may divide and die at different rates.

Finally, in the transmission conditions~\eqref{tlp}$_3$ and~\eqref{tlp}$_4$ the vector $\n_{ij}$ is the unit normal to $\Sigma_{ij}$ that points towards ${\cal D}_{j}$, and the notation $[\![ (\cdot) ]\!] := (\cdot)_{j} - (\cdot)_{i}$ is used for the jump of the quantity $(\cdot)$ across the interface $\Sigma_{ij}$. Consistently with the biophysical problem at hand, these transmission conditions ensure, respectively, mass-flux-continuity and stress-continuity across each interface $\Sigma_{ij}$.
  
Note that, since we are only interested in modelling the dynamics of the different cell populations, here we do not describe the dynamics of other constituents of the porous environment in which cells are embedded, such as extracellular fluids and the extracellular matrix. Hence, the mixture is neither saturated nor closed.


Building on~\cite{chaplain2019derivation}, we make the assumptions given hereafter.
\begin{ass}\label{ass:mu}
The functions $\mu^n_i(t,{\bf x})>0$ and $\Gamma^n_i(\phi^n_i, \Phi_i)$ are continuously differentiable. 
\end{ass}
\begin{ass}\label{ass.p}
The functions $P(\Phi_i)$ and $p ^n(\phi^n_i)$ are continuously differentiable and monotonically increasing.
\end{ass}

A crucial point in our study is how continuity of stresses given by condition~\eqref{tlp}$_4$ transfers to continuity of volume fractions of the single cell populations $\phi^n$ and then to continuity of the total cell volume fraction $\Phi$. For this reason, we will divide the presentation of the results of the study into the cases given hereafter.
\begin{description}
\item{\bf Case 1:} $P \equiv 0$ and $p^n\not\equiv 0\ \ \forall n\in {\cal N}$. In this case, under Assumption~\ref{ass.p}, condition~\eqref{tlp}$_4$ implies the continuity of $\phi^n$ across the interfaces between the different sub-domains (i.e. $[\![\phi^n]\!]=0$ for all $n\in {\cal N}$).
\item{\bf Case 2:}
$P\not\equiv 0$ and $p^n \equiv 0\ \ \forall n\in {\cal N}$. In this case, under Assumption~\ref{ass.p}, condition~\eqref{tlp}$_4$ implies the continuity of $\Phi$ across the interfaces between the different sub-domains (i.e. $[\![\Phi]\!]=0$).
\item{\bf Case 3:}
$P\not\equiv 0$ and $p^n \not\equiv 0\ \ \forall n\in {\cal N}$. In this case, under Assumption~\ref{ass.p}, if $\phi^n$ is continuous across the interfaces between the different sub-domains for all $n\in {\cal N}$, and thus $\Phi$ is continuous as well, then condition~\eqref{tlp}$_4$ will certainly be satisfied. However, due to the generality of the functions $P$ and $p^n$, the existence of situations in which condition~\eqref{tlp}$_4$ is satisfied but $\phi^n$ is not continuous across the interfaces cannot be excluded a priori. This is why, when considering this case, we require the following assumption to hold:
\begin{ass}\label{ass:cont}
The  system resulting from imposing $[\![P(\Phi)+p^n(\phi^n)]\!]=0$ for $n\in{\cal N}$ is satisfied only if $[\![\phi^n]\!]=0$.
\end{ass}
\end{description}


In most biologically relevant scenarios arising in the study of cell invasion through thin layers, such as porous membranes and tissue monolayers, the thickness of the layer is much smaller than the characteristic size of the spatial domain in which the cells are contained. Therefore, defining the thickness of the layer modelled by ${\cal D}_2$ as $\displaystyle{\e := \max_{\hat{\x}_{12} \in \Sigma_{12}} \big\{ \min_{\n_{12}} \{a > 0: \hat{\x}_{12} + a \, {\bf n}_{12} \in \Sigma_{23}\} \big\}}$, we rewrite the transmission problem~\eqref{tlp} as
\begin{equation}\label{tlpeps}
\left\{
\begin{array}{ll}
\displaystyle{\frac{\partial \phi^n_{i \e}}{\partial t} - \nabla \cdot\{\mu^n_{i \e} \, \phi^n_{i \e} \, \nabla [P(\Phi_{i\e})+p^n(\phi^n_{i\e}) ]\} = \Gamma^n_{i}(\phi^n_{i \e}, \Phi_{i \e})} 
&{\rm in}\ {\cal D}_{i \e}, \; i=1,2,3,\\[10pt]
\mu^n_{i \e} \, \phi^n_{i \e} \, \nabla [P(\Phi_{i\e})+p^n(\phi^n_{i\e}) ] \cdot {\bf n}_{ij} =
\mu^n_{j \e} \, \phi^n_{j \e} \, \nabla [P(\Phi_{j\e})+p^n(\phi^n_{j\e}) ] \cdot {\bf n}_{ij} &{\rm on}\ \Sigma_{ij \e}, \; i=1,2, \; j=i+1,\\[10pt]
P(\Phi_{i\e})+p^n(\phi^n_{i\e}) = P(\Phi_{j\e})+p^n(\phi^n_{j\e}) &{\rm on}\ \Sigma_{ij \e}, \; i=1,2, \; j=i+1,
\end{array}
\right.
\;\; n \in {\cal N},
\end{equation}
where $\Phi_{i \e} := \displaystyle{\sum_{n = 1}^N \phi^n_{i \e}}$. Then, we wish to replace the sub-domain ${\cal D}_{2 \e}$ with an effective interface $\tilde \Sigma_{13}$, which is obtained from the actual interfaces $\Sigma_{12 \e}$ and $\Sigma_{23 \e}$ by letting $\e \to 0$, and consider, instead of the transmission problem~\eqref{tlpeps}, an effective interface problem of the following form
\begin{equation}\label{thinlpo}
\left\{
\begin{array}{ll}
\displaystyle{\frac{\partial \tilde \phi^n_i}{\partial t} - \nabla \cdot\{\tilde \mu^n_i \, \tilde \phi^n_i \, 
\nabla [P(\tilde\Phi_{i})+p^n(\tilde \phi^n_{i}) ]\} = \Gamma^n_i(\tilde \phi^n_{i},\tilde \Phi_{i})} 
&{\rm in}\ \tilde {\cal D}_i, \quad i=1,3,\\[10pt]
\text{transmission conditions} &{\rm on}\ \tilde \Sigma_{13},
\end{array}
\right.
\;\; n\in{\cal N},
\end{equation}
with $\displaystyle{\tilde {\cal D}_i := \lim_{\e \to 0} {\cal D}_{i \e}}$, $\displaystyle{\tilde \Sigma_{13} := \lim_{\e \to 0} \Sigma_{12 \e} = \lim_{\e \to 0} \Sigma_{23 \e}}$ and $\displaystyle{\tilde {\mu}^n_i := \lim_{\e \to 0} {\mu}^n_{i \e}}$, $\displaystyle{\tilde \phi^n_i := \lim_{\e \to 0} \phi^n_{i \e}}$ and $\displaystyle{\tilde \Phi_{i} := \displaystyle{\sum_{n = 1}^N \tilde \phi^n_{i}}}$ for $i=1,3$.
This requires us to find biophysically-consistent transmission conditions to complete the effective interface problem~\eqref{thinlpo}. The transmission conditions derived here (see Proposition~\ref{Th1}and Remark 1 below) apply to the case in which the following natural conditions hold
\begin{equation}
\label{eq:assTC2}
\mu^n_{2 \e}\xrightarrow[\e \to 0]{} 0 \; \text{ in such a way that } \; \frac{\mu^n_{2 \e}}{\e} \xrightarrow[\e \to 0]{} \tilde \mu^n_{13}, \quad n\in{\cal N},
\end{equation}
\begin{equation}
\label{eq:assTC2b}
{\rm and}\qquad \lim_{\e \to 0} \frac{\nabla \mu^n_{2 \e}}{\e} \cdot {\n}_{12}  = \lim_{\e \to 0} \frac{\nabla \mu^n_{2 \e}}{\e} \cdot {\n}_{23}  = 0, \quad n\in{\cal N},
\end{equation}
where $\tilde \mu^n_{13}$ is a real, positive and bounded function that can be seen as the {\it effective mobility coefficient} of cells in the $n^{th}$ population through the thin membrane modelled by the effective interface $\tilde \Sigma_{13}$. In the remainder of this section, we will use the notation $\tilde{\n}_{13}$ for the unit vector normal to the interface $\tilde \Sigma_{13}$ that points towards the sub-domain $\tilde {\cal D}_{3}$. 

\begin{prpstn}\label{Th1} 
Under Assumptions \ref{ass:mu} and \ref{ass.p}, the following set of transmission conditions 
\begin{equation}
\label{eq:TCflux}
\tilde \mu^n_1 \, \tilde \phi^n_1  \nabla [P(\tilde \Phi_1)+p^n(\tilde \phi^n_1)] \cdot \tilde{\n}_{13}= 
\tilde \mu^n_3 \, \tilde \phi^n_3  \nabla [P(\tilde \Phi_3)+p^n(\tilde \phi^n_3)] \cdot\tilde{\n}_{13} 
\quad  \text{ on } \tilde \Sigma_{13}, \quad n\in{\cal N}
\end{equation}
formally applies to the effective interface problem~\eqref{thinlpo}. Moreover, assume that conditions~\eqref{eq:assTC2} and~\eqref{eq:assTC2b} hold as well and define 
\begin{equation}\label{Pi}
\Pi^{'}(\Phi)  := \Phi \, P'(\Phi)\quad{\rm and}\quad \pi^n{'}(\phi^n)  := \phi^n p^n{'}(\phi^n).
\end{equation} 
Then, formally,
\begin{description}
\item{$\bullet$} in Case 1 (i.e. when $P \equiv 0$ and $p^n \not\equiv 0\ \ \forall n\in {\cal N}$), the transmission conditions of the effective interface problem~\eqref{thinlpo} are
\begin{equation}\label{eq:TC1}
\tilde\mu_{13}^{n}[\![\pi^n]\!]
=\tilde\mu^n_1\tilde\phi^n_1\nabla p^n(\tilde\phi^n_1)\cdot\tilde{\n}_{13}
=\tilde\mu^n_3\tilde\phi^n_3\nabla p^n(\tilde\phi^n_3)\cdot\tilde{\n}_{13}
 \quad \text{ on } \; \tilde \Sigma_{13}, \quad n\in{\cal N};
\end{equation}
\item{$\bullet$} in Case 2 (i.e. when $P \not\equiv 0$ and $p^n \equiv 0\ \ \forall n\in {\cal N}$), the transmission conditions of the effective interface problem~\eqref{thinlpo} are given by the set of transmission conditions~\eqref{eq:TCflux} alongside
\begin{equation}\label{eq:TC2}
\left[\!\!\left[\Pi(\Phi)\right]\!\!\right] 
=\sum_{n=1}^N \dfrac{\tilde\mu^n_1}{\tilde\mu^n_{13}}\tilde\phi^n_1\nabla P(\tilde \Phi_1)\cdot\tilde{\n}_{13}
=\sum_{n=1}^N \dfrac{\tilde\mu^n_3}{\tilde\mu^n_{13}}\tilde\phi^n_3\nabla P(\tilde \Phi_3)\cdot\tilde{\n}_{13}
 \quad \text{ on } \; \tilde \Sigma_{13};
\end{equation}
\item{$\bullet$} in Case 3 (i.e. when $P \not\equiv 0$ and $p^n \not\equiv 0\ \ \forall n\in {\cal N}$), under Assumption~\ref{ass:cont}, the transmission conditions of the effective interface problem~\eqref{thinlpo} include the set of transmission conditions~\eqref{eq:TCflux} and must satisfy
\begin{equation}
\label{eq:TC3}
\left[\!\!\left[\Pi(\Phi)+\sum_{n=1}^N\pi^n(\phi^{n})\right]\!\!\right] 
=\sum_{n=1}^N 
\dfrac{\tilde\mu^n_1}{\tilde\mu^n_{13}}\tilde\phi^n_1\nabla [P(\tilde \Phi_1)+p^n(\tilde \phi^n_1)]\cdot\tilde{\n}_{13}
=\sum_{n=1}^N 
\dfrac{\tilde\mu^n_3}{\tilde\mu^n_{13}}\tilde\phi^n_3\nabla [P(\tilde \Phi_3)+p^n(\tilde \phi^n_3)]\cdot\tilde{\n}_{13}
 \quad \text{ on } \; \tilde \Sigma_{13}.
\end{equation}
\end{description}
\end{prpstn}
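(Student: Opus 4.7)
The plan is a formal matched asymptotic expansion in $\e$, generalising the single-population derivation of~\cite{chaplain2019derivation}. I would fix a point on $\Sigma_{12\e}$, introduce a local orthonormal frame aligned with $\n_{12}$, and parametrise the membrane ${\cal D}_{2\e}$ by the signed normal distance $s\in[0,\e]$ together with tangential coordinates. Rescaling by $y:=s/\e\in[0,1]$ resolves an inner region in which each unknown is sought as a formal power series
\begin{equation*}
\phi^n_{2\e} = \phi^n_{2,0}(t,y,\cdot) + \e\,\phi^n_{2,1}(t,y,\cdot) + O(\e^2),
\end{equation*}
with analogous outer expansions in $\tilde{\cal D}_1$ and $\tilde{\cal D}_3$, linked to the inner series by matching as $y\to 0^+$ with the $\tilde{\cal D}_1$-trace on $\tilde\Sigma_{13}$ and as $y\to 1^-$ with the $\tilde{\cal D}_3$-trace.

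Under the rescaling the normal part of the divergence carries a factor $\e^{-1}$, so, combined with the scaling $\mu^n_{2\e}/\e\to\tilde\mu^n_{13}$ in~\eqref{eq:assTC2} and the vanishing normal gradients of $\mu^n_{2\e}/\e$ in~\eqref{eq:assTC2b}, the dominant balance of the inner version of~\eqref{tlpeps}$_1$ is the $O(\e^{-1})$ equation
\begin{equation*}
\partial_y\!\left[\tilde\mu^n_{13}\,\phi^n_{2,0}\,\partial_y\!\left(P(\Phi_{2,0})+p^n(\phi^n_{2,0})\right)\right]=0,
\end{equation*}
so that the normal flux $J^n:=\tilde\mu^n_{13}\,\phi^n_{2,0}\,\partial_y(P+p^n)$ is independent of $y$ across the membrane. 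Matching the inner flux-continuity condition~\eqref{tlpeps}$_2$ at $\Sigma_{12\e}$ and $\Sigma_{23\e}$ with the outer expansions forces the common value of $J^n$ to coincide, at $y=0$ and $y=1$ respectively, with the normal components of $\tilde\mu^n_i\,\tilde\phi^n_i\,\nabla[P(\tilde\Phi_i)+p^n(\tilde\phi^n_i)]$ for $i=1,3$, which is precisely the transmission condition~\eqref{eq:TCflux}.

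To extract the pressure-jump-type conditions I would exploit the identities $\phi^n\,\partial_y p^n(\phi^n)=\partial_y\pi^n(\phi^n)$ for each $n$ and $\sum_n\phi^n\,\partial_y P(\Phi)=\Phi\,P'(\Phi)\,\partial_y\Phi=\partial_y\Pi(\Phi)$ encoded by~\eqref{Pi}, the latter being the structural reason why a summation over populations appears in~\eqref{eq:TC2}-\eqref{eq:TC3}. Dividing $J^n$ by $\tilde\mu^n_{13}$, summing over $n\in{\cal N}$ and integrating in $y$ from $0$ to $1$ yields
\begin{equation*}
\sum_{n=1}^N\frac{J^n}{\tilde\mu^n_{13}}=\left[\Pi(\Phi_{2,0})+\sum_{n=1}^N\pi^n(\phi^n_{2,0})\right]_{y=0}^{y=1}.
\end{equation*}
The inner traces at $y=0$ and $y=1$ are identified with $\tilde\phi^n_1$ and $\tilde\phi^n_3$ via the pressure-continuity condition~\eqref{tlpeps}$_3$: in Case~1 from the monotonicity of $p^n$ (Assumption~\ref{ass.p}), in Case~2 from the monotonicity of $P$ applied to $\Phi$, and in Case~3 directly via Assumption~\ref{ass:cont}. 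Combined with~\eqref{eq:TCflux} this produces~\eqref{eq:TC3}; specialising to $\Pi\equiv0$ (Case~1) decouples the sum into the $N$ independent relations~\eqref{eq:TC1}, while $\pi^n\equiv0$ for every $n$ (Case~2) reduces the right-hand side to~\eqref{eq:TC2}.

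The main obstacle I anticipate is the careful bookkeeping of the matched expansion near the two inner boundaries $y=0$ and $y=1$: one has to justify that tangential-gradient contributions inside the membrane are genuinely subleading (relying on~\eqref{eq:assTC2b} and on the regularity granted by Assumption~\ref{ass:mu}), that the inner traces of $\phi^n_{2,0}$ and $\Phi_{2,0}$ coincide with the one-sided limits of their outer counterparts under each of the three sets of continuity hypotheses, and that the $O(1)$ time-derivative and growth terms affect only the $O(1)$ balance and therefore do not obstruct the $O(\e^{-1})$ identification. Since the derivation is only formal, I would not attempt to quantify higher-order remainders.
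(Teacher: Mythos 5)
Your derivation follows essentially the same route as the paper's proof: the inner rescaling $y=s/\e$, the leading-order balance giving a $y$-independent normal flux $J^n$ (the paper's constant $C_n$), the matching of that flux with the outer traces to obtain~\eqref{eq:TCflux}, the identities $\phi^n\partial_y p^n=\partial_y\pi^n$ and $\sum_n\phi^n\partial_y P(\Phi)=\partial_y\Pi(\Phi)$, the division by $\tilde\mu^n_{13}$ followed by summation over $n$ and integration over $(0,1)$, and the identification of the inner traces with the outer one-sided limits via the pressure-continuity condition together with monotonicity (Cases 1 and 2) or Assumption~\ref{ass:cont} (Case 3). All of this matches the paper, which in addition restricts to parallel planar interfaces for ease of presentation, a simplification you implicitly also rely on when discarding tangential contributions.

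The one step that does not follow as written is your treatment of Case 1. You derive only the summed identity $\sum_{n}J^n/\tilde\mu^n_{13}=[\Pi+\sum_n\pi^n]_{y=0}^{y=1}$ and then claim that setting $\Pi\equiv 0$ ``decouples the sum into the $N$ independent relations''~\eqref{eq:TC1}. An equality of sums over $n$ does not imply equality term by term, so Case 1 cannot be obtained by specialising the Case-3 relation. The correct (and easy) repair is the one the paper uses: do not sum over $n$ at all in Case 1. Since $P\equiv 0$, the per-population identity $J^n=\tilde\mu^n_{13}\int_0^1\partial_y\pi^n(\phi^n_{2,0})\,{\rm d}y=\tilde\mu^n_{13}\,[\![\pi^n]\!]$ already follows from the constancy of $J^n$ and the identity $\phi^n\partial_y p^n=\partial_y\pi^n$ that you state, combined with $[\![\phi^n]\!]=0$ from the monotonicity of $p^n$. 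With that adjustment your argument is complete and coincides with the paper's; note also that the summation over $n$ is genuinely needed only because the cross-diffusion term $\phi^n\partial_y P(\Phi)$ is not a $y$-derivative population by population, which is exactly why Cases 2 and 3 yield a single summed condition rather than $N$ separate ones.
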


\begin{proof}
For ease of presentation, we formally derive the required transmission conditions in the case where $\Sigma_{12 \e}$ and $\Sigma_{23 \e}$ are parallel planes. The formal derivation carried out here could be extended to more general cases, provided that the sub-domain ${\cal D}_{2 \e}$ and the interfaces $\Sigma_{12 \e}$ and $\Sigma_{23 \e}$ are sufficiently smooth. In fact, since in these more general cases the curvature of the interfaces would appear in the asymptotic expansions employed in the derivation, we expect this formal procedure to fail when the curvature becomes singular. 

We introduce the notation ${\cal D}_{2 \e} \ni \x := (x_\perp, \mathbf{x}_{\Sigma})$, where $x_\perp := \x \cdot \n_{12} $. We also make the change of variables $x_{\perp} \mapsto x_{\perp} - \hat{x}_{12 \perp}$, with $\hat x_{12 \perp}$ given by $\hat{\x}_{12} = (\hat x_{12 \perp}, \hat \x_{12 \Sigma}) \in \Sigma_{12 \e}$, and let $\eta := \dfrac{x_{\perp}}{\e}  \in (0,1)$. Moreover, building on~\cite{chaplain2019derivation}, we make the ansatz $\phi^n_{2\varepsilon}\left(\dfrac{x_{\perp}}{\e}, \mathbf{x}_{\Sigma}\right) = \phi^{n 0}_{2}(\eta, \mathbf{x}_{\Sigma}) + \varepsilon \, \phi^{n 1}_{2}(\eta, \mathbf{x}_{\Sigma}) + \smallO(\varepsilon)$ so that $\Phi_{2\varepsilon}\left(\dfrac{x_{\perp}}{\e}, \mathbf{x}_{\Sigma}\right) = \Phi^{0}_{2}(\eta, \mathbf{x}_{\Sigma}) + \varepsilon \, \Phi^{1}_{2}(\eta, \mathbf{x}_{\Sigma}) + \smallO(\varepsilon)$. Substituting in~\eqref{tlpeps}$_1$ formally gives, at the leading order,
\beq\label{leading}
\frac{\partial}{\partial \eta} \left\{\tilde\mu^n_{13} \, \phi^{n0}_{2} \dfrac{\partial }{\partial \eta}[P(\Phi_{2}^0)+p^n(\phi^{n0}_{2})]\right\}= 0 \; \Longrightarrow \; \tilde\mu^n_{13} \, \phi^{n0}_{2} \dfrac{\partial }{\partial \eta}[P(\Phi_{2}^0)+p^n(\phi^{n0}_{2})] =C_n \; \forall \eta \in (0,1), \; n\in{\cal N},
\eeq
where $C_n$ are real constants. Moreover, substituting in~\eqref{tlpeps}$_2$ formally gives, at the leading order,
\begin{equation}
\label{res2}
\begin{cases}
\tilde\mu^n_{13} \phi^{n0}_{2} \dfrac{\partial}{\partial \eta}[P(\Phi_{2}^{0})+p^n(\phi^{n0}_{2}) ]\Big|_{\eta=0} = \tilde \mu^n_{1} \, \tilde \phi^{n}_{1}\nabla [P(\tilde \Phi_{1})+p^n(\tilde \phi^n_{1}) ] \cdot \tilde{\n}_{13} \Big|_{\tilde \Sigma_{13}}, \\[10pt]
\tilde\mu^n_{13} \phi^{n0}_{2} \dfrac{\partial}{\partial \eta}[P(\Phi_{2}^0)+p^n(\phi^{n0}_{2}) ]\Big|_{\eta=1} = \tilde \mu^n_{3} \,  \tilde \phi^{n}_{3}  \nabla [P(\tilde \Phi_{3})+p^n(\tilde \phi^{n}_{3}) ] \cdot \tilde{\n}_{13} \Big|_{\tilde \Sigma_{13}},
\end{cases}
\qquad n\in{\cal N}.
\end{equation}
Letting ${\bf Q}_i^n := \tilde\mu_{i}^n \tilde \phi_{i}^{n} \nabla[P(\tilde \Phi_{i})+p^n(\tilde \phi_{i}^{n})]$ be the flux of the $n^{th}$ population in the sub-domain $\tilde {\cal D}_i$ with $i=1,3$, conditions~\eqref{leading} and~\eqref{res2} merge into 
$$
{\bf Q}_1^n\cdot\tilde{\n}_{13}\Big|_{\tilde \Sigma_{13}}={\bf Q}_3^n\cdot\tilde{\n}_{13}\Big|_{\tilde \Sigma_{13}}, \quad n\in{\cal N},
$$
which implies that the transmission conditions~\eqref{eq:TCflux} formally hold, and 
\begin{equation}\label{condition}
{\bf Q}_1^n\cdot\tilde{\n}_{13}\Big|_{\tilde \Sigma_{13}}={\bf Q}_3^n\cdot\tilde{\n}_{13}\Big|_{\tilde \Sigma_{13}}=
\tilde\mu_{13}^{n}\phi_{2}^{n0} \dfrac{\partial}{\partial\eta}[P(\Phi_{2}^0)+p^n(\phi_{2}^{n0})]=C_n\,, \quad n\in{\cal N}.
\end{equation}
Hence, using the fact that, under the assumptions considered here, $\tilde{\mu}^n_{13}$ is a constant function with respect to $\eta$, while its value could vary in $\mathbf{x}_{\Sigma}$, for all $n \in {\cal N}$, we formally have
\beq\label{C_n}
C_n=\int_0^1 C_n\,{\rm d}\eta = \int_0^1\tilde\mu_{13}^{n} \, \phi_{2}^{n0} \dfrac{\partial}{\partial\eta}\left[P(\Phi_{2}^0)+p^n(\phi_{2}^{n0}) \right]\,{\rm d}\eta =\tilde\mu_{13}^{n} \int_0^1\left[\phi_{2}^{n0} \dfrac{\partial}{\partial\eta}P(\Phi_{2}^0)+\dfrac{\partial}{\partial\eta} \pi^n(\phi_{2}^{n0})\right]\,{\rm d}\eta,
\eeq
with $\pi^n$ defined via~\eqref{Pi}. Furthermore, dividing both sides of~\eqref{C_n} by $\tilde \mu^{n}_{13}>0$ and summing over $n$ gives
\beq\label{condition_senza}
\sum_{n=1}^N\frac{C_n}{\tilde\mu_{13}^n}=
\int_0^1 \, \left[\Phi^{0}_{2} \frac{\partial}{\partial \eta}P(\Phi^0_{2})+
\sum_{n=1}^N \dfrac{\partial}{\partial\eta} \pi^n(\phi_{2}^{n0}) \right]\, {\rm d}\eta =\int_0^1 \frac{\partial  }{\partial \eta}\left[\Pi(\Phi^0_{2}) + \sum_{n=1}^N \pi^n(\phi_{2}^{n0})\right]\, {\rm d}\eta, 
\eeq
with $\Pi$ defined via~\eqref{Pi}. We will now consider Cases 1-3 separately. 

In Case 1 (i.e. when $P \equiv 0$ and $p^n \not\equiv 0\ \ \forall n\in {\cal N}$), \eqref{C_n} reduces to
\beq\label{C_n1}
C_n=\tilde\mu_{13}^{n} \int_0^1 \dfrac{\partial}{\partial\eta}\pi^n(\phi_{2}^{n0})\,{\rm d}\eta=\tilde\mu_{13}^{n} \left[\pi^n(\phi_{2}^{n0})\Big|_{\eta=1}-\pi^n(\phi_{2}^{n0})\Big|_{\eta=0}\right], \quad n\in{\cal N}.
\eeq
Since, as previously noted, in Case 1 we have $[\![\phi^n]\!]=0$ for all $n\in {\cal N}$, combining~\eqref{C_n1} with~\eqref{condition} yields \eqref{eq:TC1}. 

In Case 2 (i.e. when $P \not\equiv 0$ and $p^n \equiv 0\ \ \forall n\in {\cal N}$), \eqref{condition_senza} reduces to
\beq\label{C_n2}
\sum_{n=1}^N\frac{C_n}{\tilde\mu_{13}^n}= \int_0^1 \dfrac{\partial}{\partial \eta}\Pi(\Phi^0_{2}) \,{\rm d}\eta=\Pi(\Phi^0_{2})\Big|_{\eta=1}- \Pi(\Phi^0_{2})\Big|_{\eta=0}.
\eeq
Since, as previously noted, in Case 2 we have $[\![\Phi]\!]=0$, combining~\eqref{C_n2} with~\eqref{condition} yields \eqref{eq:TC2}. 


In Case 3 (i.e. when $P \not\equiv 0$ and $p^n \not\equiv 0\ \ \forall n\in {\cal N}$), \eqref{condition_senza} implies that
\beq\label{C_n3}
\sum_{n=1}^N\frac{C_n}{\tilde\mu_{13}^n}=
\Pi(\Phi^0_{2})\Big|_{\eta=1}- \Pi(\Phi^0_{2})\Big|_{\eta=0} + \sum_{n=1}^N \left[\pi^n(\phi_{2}^{n0})\Big|_{\eta=1}- \pi^n(\phi_{2}^{n0})\Big|_{\eta=0}\right].
\eeq
Under Assumption~\ref{ass:cont}, combining~\eqref{C_n3} with~\eqref{condition} yields \eqref{eq:TC3}. 
\end{proof}

Note that a method of proof similar to the one used for Case 3 in Proposition~\ref{Th1} would allow one to show that if $P \not\equiv 0$ and $p^n \equiv 0$ only for some $n\in {\cal N}$, then \eqref{eq:TC3} formally holds even if Assumption~\ref{ass:cont} is not satisfied. In fact, in this case one would need that $[\![\Phi]\!]=0$ and $[\![\phi^n]\!]=0$ only for those $n$ such that $p^n\not\equiv0$, and these continuity conditions would hold under the other assumptions of Proposition~\ref{Th1}.

\begin{rmrk}
Note that conditions~\eqref{eq:TCflux} ensure continuity of mass fluxes across the effective interface $\tilde \Sigma_{13}$. Moreover, the other conditions provided by Proposition~\ref{Th1} allow one to close the effective interface problem~\eqref{thinlpo} in Cases 1 and 2.
In fact, in Case 1 conditions~\eqref{eq:TC1} are simply $2N$ transmission conditions for a system of $N$ parabolic equations. In Case 2, conditions~\eqref{eq:TCflux} and \eqref{eq:TC2} define a set of $N+1$ conditions on the interface $\tilde \Sigma_{13}$ and the system~\eqref{thinlpo} can be rewritten as $2$ parabolic equations, which require $2$ conditions on $\tilde \Sigma_{13}$, coupled with $2(N-1)$ hyperbolic equations, which require $N-1$ conditions on $\tilde \Sigma_{13}$. On the other hand, if $P\not\equiv 0$ and $p^n \not\equiv 0$ for all or some $n \in {\cal N}$, then conditions~\eqref{eq:TCflux} and \eqref{eq:TC2} are not sufficient for closing problem~\eqref{thinlpo}, since more than $N+1$ conditions on $\tilde \Sigma_{13}$ are needed. This is a classical situation in mixture theory whereby one needs to transfer transmission conditions for the overall mixture onto the single constituents. For this to be done, further assumptions are required. In any case, the transmission conditions must satisfy~\eqref{eq:TC2}. This can be ensured, for instance, by imposing~\eqref{eq:TCflux} alongside 
$$
\tilde\mu_{13}^{n}\{\alpha^n\Pi(\tilde\Phi_3)-\beta^n\Pi(\tilde\Phi_1)+[\![\pi^n(\tilde\phi^{n})]\!] \} =\tilde\mu^n_1\tilde\phi^n_1\nabla [P(\tilde\Phi_1)+p^n(\tilde\phi^n_1)]\cdot\tilde{\n}_{13}, \quad n\in{\cal N},
$$
with $\displaystyle{\sum_{n=1}^N \alpha^n=\sum_{n=1}^N \beta^n=1}$, so that dividing by $\tilde\mu_{13}^{n}$ and summing over $n$ yields~\eqref{eq:TC2}.
\end{rmrk}

\section{Concluding remarks}
Starting from a continuum mechanical model for the dynamics of multiple cell populations that migrate through different parts of tissue separated by a thin membrane, which is represented as a finite region of small thickness, we have introduced a limiting transmission problem,  whereby the membrane is replaced by an effective interface, and derived a set of biophysically-consistent interface conditions to close the limiting problem.

The effective mobility coefficient $\tilde \mu^n_{13}$ of the cells of population $n$ through the thin membrane modelled by the effective interface $\tilde \Sigma_{13}$ can be related to the size of the pores of the membrane, and to the geometrical and mechanical characteristics of the cells, as similarly done in~\cite{arduino2015multiphase, giverso2018nucleus, giverso2014influence}. This makes the limiting transmission problem~\eqref{thinlpo} suitable for providing a possible macroscopic description of cell invasion through thin membranes that takes explicitly into account cell microscopic characteristics, such as the mechanical constraints imposed by the cell nuclear envelope and the solid material inside it~\cite{wolf2013physical}. 

Mathematical models based on the effective interface problem~\eqref{thinlpo} may be fruitfully applied to the study of a variety of biological processes in which cells characterised by different proliferative abilities and mobility properties (e.g. cells of different types, cells of the same type but expressing different phenotypic characteristics, cells displaying different behaviours connected with the cell cycle or the interaction with the extracellular environment) invade thin membranes or tissue monolayers. These processes include immune surveillance and pathological conditions,  such as cancer invasion and fibrosis, which will be the object of study of future work.

We conclude by stressing the fact that carrying out a rigorous derivation of the transmission conditions provided by Proposition~\ref{Th1} remains an open problem -- which is particularly challenging in Cases 2 and 3 due the way in which equations~\eqref{tlpeps}$_1$ for $n \in \mathcal{N}$ are coupled through $\nabla P$ -- and a strong dependence of the convergence space on the form of the functions $P$ and $p^n$ is to be expected. Moreover, \emph{ad hoc} numerical schemes need to be identified, depending on the nature of the effective interface problem~\eqref{thinlpo} and on the choice of the functions $P$ and $p^n$, in order to carry out numerical simulations.

\bibliographystyle{siam}
\bibliography{biblio}

\end{document}